\begin{document}

\title {Note on the covering theorem for complex polynomials}
\author{Alexey Solyanik}
\date{24 October 2014 }

\email{transbunker@gmail.com}
\newtheorem{remark}{Remark}
\newtheorem{theorem}{Theorem}
\newtheorem{lemma}{Lemma}
\newtheorem{proposition}{Proposition}
\newtheorem{corollary}{Corollary}
\maketitle

In this note we will give the answer to the questions posed in \cite{dub} and \cite{dub2} concerning covering properties of complex polynomials.  These questions attract our attention when we study stability properties of dynamical systems stabilized by the feedback control. It was discovered recently \cite{dm1},  that these properties play the central role in the matter.

We start with the simple observation on the stable polynomials. From this observation we deduce the variant of Koebe One-Quarter Theorem for complex polynomials and from the last statement we obtain the sharp constant in the Theorem 1 from \cite{dub} (Theorem 3.8 in \cite{dub2} )

Let $\Delta=\lbrace z \colon \vert z \vert < 1\rbrace$ be the open unit disk and $\bar{\Delta}$ its closure. 

Let $\chi (z)= a_0 + a_1z + \dots a_n z ^n$ be a given polynomial of degree at most $n$. We would like to define it $n$-inverse by the formula
\begin{equation}
\label{chi}
\chi ^* (z)=a_n + a_{n-1}z + \dots +a_0z^n
\end{equation}
Since $\chi ^* (z) =z^n \chi  (1/z)$, polynomial $\chi ^* (z)$   "inverse"  the ranges $\chi(\Delta)$ and $\chi (\mathbb{C}\setminus \bar{\Delta})$.

We shall use this definition even in the case when $a_n=0$. For example the $4$-inverse of the polynomial $z$ is $z^3$ and vice versa.

So defined $n$-inversion is an \textit{involution } on the set of polynomials of degree at most $n$, i. e. $\chi^{**}(z)=\chi (z)$.

The problem of  \textit{stability} of polynomial $\chi(z)$, which means, that all roots of  $\chi(z)$ lie in $\Delta$, is equivalent to the problem of the description of the \textit{image }of $\bar{\Delta}$ by the map $\chi^*$ .

 Indeed,  if \textit{all} zeros of some polynomial $\chi(z)$ lie in the disc $\Delta$, then no one lies outside, which simply means, that $0 \notin \chi^*(\bar{\Delta})$.

To be more precise we write the last observation like a lemma.

\begin{lemma} Let $\chi (z)= a_0 + a_1z + \dots a_n z ^n$  be a polynomial with $a_n \neq 0$. Then
\begin{equation}
\label{g1}
\mbox{all zeros of} ~~~~~\chi (z)~~~~~~ \mbox{ lie in }~~~~ \Delta
\end{equation}
if and only if 
\begin{equation}
\label{g2}
0 \notin \chi^*(\bar{\Delta})
\end{equation}
\end{lemma}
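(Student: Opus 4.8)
The plan is to exploit the relation $\chi^*(z)=z^n\chi(1/z)$ already recorded in the excerpt, together with the hypothesis $a_n\neq 0$, in order to match the zeros of $\chi$ with those of $\chi^*$ under the inversion $w\mapsto 1/w$. First I would note that, because $a_n\neq 0$, the polynomial $\chi$ has degree exactly $n$ and therefore exactly $n$ zeros $w_1,\dots,w_n$ counted with multiplicity; moreover $\chi^*(0)=a_n\neq 0$, so $z=0$ can never be a zero of $\chi^*$. This last remark is precisely what lets me use the identity $\chi^*(z)=z^n\chi(1/z)$, valid only for $z\neq 0$, without having to worry separately about the origin.

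For the forward implication I would assume every zero $w_k$ lies in $\Delta$, i.e.\ $|w_k|<1$, and show that $\chi^*(z)\neq 0$ for every $z\in\bar\Delta$. The case $z=0$ is settled by $\chi^*(0)=a_n\neq 0$. For $z\neq 0$ with $|z|\le 1$ one has $|1/z|\ge 1>|w_k|$ for all $k$, hence $1/z$ is not a zero of $\chi$; thus $\chi(1/z)\neq 0$ and consequently $\chi^*(z)=z^n\chi(1/z)\neq 0$. This yields exactly \eqref{g2}.

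For the converse I would argue by contraposition. If some zero $w$ of $\chi$ fails to lie in $\Delta$, then $|w|\ge 1$, so in particular $w\neq 0$ and the point $1/w$ satisfies $|1/w|\le 1$, i.e.\ $1/w\in\bar\Delta$. Evaluating the inverse polynomial at this point gives $\chi^*(1/w)=(1/w)^n\chi(w)=0$, so $0\in\chi^*(\bar\Delta)$, which is the negation of \eqref{g2}. Combining the two directions produces the claimed equivalence between \eqref{g1} and \eqref{g2}.

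I do not expect a genuine obstacle here; the only point demanding care is the degeneration of the identity $\chi^*(z)=z^n\chi(1/z)$ at $z=0$, which is why the hypothesis $a_n\neq 0$ is used twice — once to guarantee that $\chi$ really has $n$ zeros, and once to certify that $z=0$ is never a zero of $\chi^*$. A secondary point worth stating explicitly is the matching of the open and closed boundaries: a zero of $\chi$ of modulus exactly $1$ maps under inversion to a zero of $\chi^*$ of modulus $1$, which lies in $\bar\Delta$, so the strict inequality defining $\Delta$ in \eqref{g1} corresponds precisely to the closed disk $\bar\Delta$ appearing in \eqref{g2}.
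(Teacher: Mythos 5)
Your proof is correct and is essentially the paper's argument: both hinge on the identity $\chi^*(z)=z^n\chi(1/z)$, under which the zeros of $\chi^*$ are precisely the reciprocals of the nonzero zeros of $\chi$, so that the inversion $w\mapsto 1/w$ swaps $\Delta$ and the exterior of $\bar{\Delta}$. The only difference is presentational: the paper makes the correspondence explicit by factoring $\chi(z)=a_nz^m(z-z_{m+1})\cdots(z-z_n)$ and computing $\chi^*(z)=a_n(1-z_{m+1}z)\cdots(1-z_nz)$, whereas you argue pointwise from the identity, disposing of $z=0$ via $\chi^*(0)=a_n\neq 0$ and proving the converse by contraposition.
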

\begin{proof}
Let $\chi(z)=a_nz^m(z-z_{m+1} )\dots (z-z_n)$, where $0 \leq m<n$ (the case $m=n$ is trivial). Then \begin{equation}
\label{g3}
\chi ^* (z) = z^n \chi (1/z)=a_n(1-z_{m+1}z)\dots (1-z_nz)
\end{equation}.

Thus, from (\ref{g1}) we conclude that all zeros of $\chi^* (z) $ lie outside of $\bar{\Delta}$, which implies (\ref{g2}).
On the other hand, if (\ref{g2}) holds, then (\ref{g3})    implies (\ref{g1}).
\end{proof}

The statement of  lemma 1 is also holds if we change $\Delta $ and  $\bar{\Delta} $ . We shall state here corresponding lemma in a little different (inversion) form and without proof, which is exactly the same.
\begin{lemma}
 Let $\chi (z)= a_0 + a_1z + \dots a_n z ^n$  be a polynomial with $a_0 \neq 0$. Then, 
\begin{equation}
\label{lyap2}
0 \notin \chi(\Delta)
\end{equation}
if and only if 
\begin{equation}
\label{lyap1}
\mbox{all zeros of} ~~~~~\chi^* (z)~~~~~~ \mbox{ lie in }~~~~ \bar{\Delta}
\end{equation}

\end{lemma}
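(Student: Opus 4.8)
The plan is to mirror the proof of Lemma 1, factoring $\chi$ into linear factors and reading off the locations of the zeros of $\chi^*$ from the formula $\chi^*(z)=z^n\chi(1/z)$. The only structural difference is that the open/closed roles of the disk are interchanged, so the strict inequalities of Lemma 1 become non-strict here.

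Since $a_0=\chi(0)\neq 0$, the origin is not a zero of $\chi$. Writing $d=\deg\chi\leq n$ and factoring $\chi(z)=a_d\prod_{k=1}^{d}(z-z_k)$ with every $z_k\neq 0$, I would compute, exactly as in (\ref{g3}),
\[
\chi^*(z)=z^n\chi(1/z)=a_d\,z^{\,n-d}\prod_{k=1}^{d}(1-z_k z).
\]
Thus the zeros of $\chi^*$ are the origin, with multiplicity $n-d$ arising when the degree drops below $n$, together with the reciprocals $1/z_k$ of the zeros of $\chi$.

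The remaining step is a direct comparison of the two conditions. The hypothesis (\ref{lyap2}), that $0\notin\chi(\Delta)$, says precisely that $\chi$ has no zero in the open disk, i.e. $|z_k|\geq 1$ for every $k$. On the other side, every zero of $\chi^*$ lies in $\bar{\Delta}$ if and only if $|1/z_k|\leq 1$ for every $k$ — the extra zeros at the origin lie in $\bar{\Delta}$ automatically — which is again $|z_k|\geq 1$ for every $k$. Hence the two conditions coincide, giving the asserted equivalence.

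There is no genuinely hard step; the entire content is the bookkeeping. The one point requiring care is tracking the factor $z^{\,n-d}$ produced by a possible drop in degree: its zeros sit at the origin and hence inside $\bar{\Delta}$, so they never obstruct (\ref{lyap1}). The second point is verifying that passing from the open disk in (\ref{lyap2}) to the closed disk in (\ref{lyap1}) is exactly what converts the strict inequalities of Lemma 1 into the non-strict ones needed here, so that zeros of $\chi$ lying on $\partial\Delta$ are correctly matched with zeros of $\chi^*$ on the same boundary.
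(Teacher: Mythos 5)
Your proposal is correct and follows exactly the route the paper intends: the paper states Lemma 2 without proof, remarking that the argument is "exactly the same" as for Lemma 1, and your factorization $\chi^*(z)=z^n\chi(1/z)=a_d z^{n-d}\prod_{k=1}^d(1-z_k z)$ is precisely that argument with the roles of the open and closed disks interchanged. Your explicit handling of the degree-drop factor $z^{n-d}$ (whose zeros at the origin lie harmlessly in $\bar{\Delta}$) is the mirror image of the $z^m$ factor in the paper's proof of Lemma 1, so nothing is missing.
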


It is a well known 
phenomenon  in Geometric Function Theory, 
that some  \textit{restrictions on the range $f(\Delta)$} implies \textit{estimates of Taylor coefficients of $f$}.  

For example, for
$$
f(z)=z+c_2z^2+ \dots +c_kz^k+ \dots 
$$ 
defined in the unit disk $\Delta$ :  if $f$ maps $\Delta$ \textit{into } the half-plane $\lbrace z: \Re z > -1/2 \rbrace$ then $|c_k|\leq 1$ (Caratheodori) and if the function is schlicht, i.e. maps $\Delta$ in one to one way to the simply connected domain,  then $|c_k| \leq k$ (de Brange).
 
The simplest proposition,  which reflect this phenomenon ( for complex polynomials) is the following

\begin{lemma}
\label{lemma_fc} Let $q(z)=\hat{q}(1)z +\hat{q}(2)z^2 +  \dots + \hat{q}(n) z^n$ and 
\begin{equation}
\label{l2}
w \notin q(\Delta).
\end{equation}
Then
\begin{equation}
\label{l3}
\vert \hat{q} (k) \vert \leq \binom {n} k \vert w \vert
\end{equation}

\end{lemma}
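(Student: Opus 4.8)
The plan is to convert the geometric hypothesis $w\notin q(\Delta)$ into a statement about the location of roots, and then read the coefficient bounds directly off the elementary symmetric functions of those roots. First I would dispose of a degenerate case: since $q$ has no constant term we have $q(0)=0$, and as $0\in\Delta$ this forces $0\in q(\Delta)$. Hence the hypothesis $w\notin q(\Delta)$ can only hold when $w\neq 0$, and I may assume this throughout (the case $w=0$ being vacuous).

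Next I would set $p(z)=q(z)-w$, a polynomial of degree at most $n$ whose constant term equals $-w\neq 0$. The hypothesis is exactly $0\notin p(\Delta)$, so Lemma 2 applies with $\chi=p$ and yields that every zero of the $n$-inverse $p^{*}$ lies in $\bar{\Delta}$. The key bookkeeping step is to write $p^{*}$ out explicitly from $p^{*}(z)=z^{n}p(1/z)$:
\begin{equation*}
p^{*}(z)=\hat q(n)+\hat q(n-1)z+\dots+\hat q(1)z^{n-1}-w\,z^{n},
\end{equation*}
so that the coefficient of $z^{\,n-k}$ in $p^{*}$ is precisely $\hat q(k)$ for $1\le k\le n$. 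Because the leading coefficient $-w$ is nonzero, $p^{*}$ genuinely has degree $n$ and therefore factors as $p^{*}(z)=-w\prod_{i=1}^{n}(z-\zeta_i)$ with all $|\zeta_i|\le 1$.

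Finally I would match coefficients. Expanding the product, the coefficient of $z^{\,n-k}$ equals $-w\,(-1)^{k}e_k(\zeta_1,\dots,\zeta_n)$, where $e_k$ denotes the $k$-th elementary symmetric function of the roots. Comparing with the display above gives $\hat q(k)=(-1)^{k+1}w\,e_k(\zeta)$, hence $|\hat q(k)|=|w|\,|e_k(\zeta)|$. The estimate $|e_k(\zeta)|\le\binom{n}{k}$ is immediate, since $e_k$ is a sum of $\binom{n}{k}$ products of $k$ of the $\zeta_i$, each of modulus at most one; this delivers (\ref{l3}).

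I expect the only genuine care to lie in the index bookkeeping of the inversion and in checking that $p^{*}$ has full degree $n$, so that it supplies exactly $n$ roots in $\bar{\Delta}$ for the factorization; the symmetric-function estimate is then routine. There is no serious analytic obstacle, since Lemma 2 already performs the essential work of turning the range condition into root localization.
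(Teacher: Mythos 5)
Your proof is correct and follows essentially the same route as the paper's: set $\chi(z)=q(z)-w$, apply Lemma 2 to locate the roots of $\chi^{*}$ in $\bar{\Delta}$, and bound the coefficients via Vieta's formulas and the elementary symmetric functions. Your explicit handling of the degenerate case $w=0$ and the check that $\chi^{*}$ has full degree $n$ (leading coefficient $-w\neq 0$, which is what licenses the application of Lemma 2) are small points of care that the paper leaves implicit, but the argument is the same.
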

\begin{proof}
Let $\chi (z)=q(z)-w$ and $\chi^*(z)=\hat{q}(n) + \hat{q} (n-1)  z + \dots  + \hat{q} (1) z^{n-1} - w z ^n$  is it $n$-inverse. Then, $ 0 \notin \chi(\Delta)$ and, according to lemma 2, all roots of $\chi^*(z)=-w(z-\zeta_1) \dots (z-\zeta_n)$ lie in $\bar{\Delta}$. Hence, by  Vieta's formulas,

$$
\vert \hat{q}(k) \vert =\vert    \sum_{1 \leq i_1<i_2< \dots i_k \leq n } (-1)^{k+1} w  \zeta_{i_1 }\zeta_{i_2} \dots \zeta_{i_k}\vert \leq \vert w\vert \sum_{1 \leq i_1<i_2< \dots i_k \leq n } 1 = \binom {n} k \vert w \vert
$$

The estimates (\ref{l3}) are  the best possible, since for the polynomial $q(z)=w-w(1-z)^n$ point $w \notin q(\Delta)$, but $\vert \hat{q}(k) \vert
=\binom {n} k \vert w \vert$.
\end{proof}
\vskip .3cm
Define the \textit{norm} of $q(z)=\hat{q}(0) + \hat{q}(1)z + \dots \hat{q}(n) z^n$  by the formula
$$
n(q)= \max_{0 \leq k \leq n} \frac{ \vert \hat{q} (k)\vert }{\binom{n} k}
$$ 

Denote $\Delta_r (z_0)= \lbrace z \colon \vert  z - z_0 \vert < r \rbrace$ and claim, that, because  $q : \mathbb{C} \to \mathbb{C} $ is an open mapping, the range $q(\Delta_r (z_0))$ always contains some disk $\Delta_\epsilon  (q(z_0))$. 

It is interesting, that from lemma \ref{lemma_fc} we can  obtain the (sharp) estimate of  the radius of this disk and  immediately \textit{conclude} that $q(z)$ is an open mapping.
 
 It is clear, that we can suppose that $z_0=0$, $q(z_0)=0$ and $r=1$.

\begin{corollary}
\label{cor_radius_shell} For every polynomial $q(z)= \hat{q}(1) z + \hat{q}(2) z^2  +  \dots  + \hat{q}(n) z^n$ the range $q(\Delta)$ contains the disk of radius $n(q)$ with the center at the origin :
\begin{equation}
\label{radsh1}
\Delta_{n(q)} \subseteq q(\Delta).
\end{equation}
\end{corollary}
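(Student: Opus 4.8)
The plan is to obtain the inclusion directly from the contrapositive of Lemma \ref{lemma_fc}, so that the corollary becomes a purely logical restatement of that coefficient bound. Fix an arbitrary point $w \in \Delta_{n(q)}$, i.e. with $|w| < n(q)$; I must show $w \in q(\Delta)$. By the definition of the norm,
$$
|w| < n(q) = \max_{0 \le k \le n} \frac{|\hat q(k)|}{\binom{n}{k}},
$$
so there is at least one index $k$ attaining the maximum for which $|w| < |\hat q(k)| / \binom{n}{k}$, that is, $|\hat q(k)| > \binom{n}{k}\,|w|$.

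Now I invoke Lemma \ref{lemma_fc} in contrapositive form. That lemma asserts that $w \notin q(\Delta)$ would force $|\hat q(j)| \le \binom{n}{j}\,|w|$ for \emph{every} $j$; since I have just produced an index $k$ violating this inequality, the hypothesis $w \notin q(\Delta)$ cannot hold, and therefore $w \in q(\Delta)$. As $w$ was an arbitrary point of the open disk $\Delta_{n(q)}$, this yields $\Delta_{n(q)} \subseteq q(\Delta)$, which is exactly (\ref{radsh1}).

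I expect no real obstacle, since all the analytic content is already packaged in Lemma \ref{lemma_fc}; the corollary merely records that $n(q)$ is precisely the threshold below which the coefficient bounds of that lemma must fail. The one point deserving a moment's care is the strict inequality: because $\Delta_{n(q)}$ is open, every $w$ in it satisfies $|w| < n(q)$ strictly, which is just what guarantees a \emph{strict} violation $|\hat q(k)| > \binom{n}{k}|w|$ and hence the clean contradiction with Lemma \ref{lemma_fc}. (The degenerate case $q \equiv 0$ gives $n(q)=0$ and an empty disk, so the inclusion is trivial.) That the radius $n(q)$ cannot be enlarged is already witnessed by the extremal polynomial $q(z) = w - w(1-z)^n$ from the proof of Lemma \ref{lemma_fc}: there $|\hat q(k)| = \binom{n}{k}|w|$ for all $k$, so $n(q) = |w|$, yet the boundary point $w$ does not belong to $q(\Delta)$.
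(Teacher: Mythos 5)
Your proof is correct and is essentially the paper's own argument: the paper likewise deduces from Lemma \ref{lemma_fc} that $w \notin q(\Delta)$ forces $|w| \geq n(q)$, and concludes the inclusion by contraposition. Your write-up merely spells out the same one-line deduction in more detail (plus the harmless remarks on the degenerate case and sharpness).
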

\begin{proof}
Lemma \ref{lemma_fc} implies $\vert w \vert \geq n(q)$ for $w \notin q(\Delta)$ and 
we have $\Delta_{n(q)} \subseteq q(\Delta)$.
\end{proof}

Since  $n(q) \geq 1/n$ for the polynomial $q$ of degree $n$, such that $q (0)=0$ and $q'(0)=1$ we have

\begin{corollary}
\label{cor_radius_shell_U} For every polynomial $q(z)=z + \dots q_n z^n$ the range $q(\Delta)$ contains the disk of radius $1/n$ with the center at the origin:
\begin{equation}
\label{radsh2}
\Delta_{1/n} \subseteq q(\Delta)
\end{equation}
\end{corollary}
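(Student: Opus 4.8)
The plan is to deduce this statement directly from Corollary \ref{cor_radius_shell}, which already guarantees $\Delta_{n(q)} \subseteq q(\Delta)$ for \emph{any} polynomial vanishing at the origin. The only thing that remains is to verify the lower bound $n(q) \geq 1/n$ announced in the sentence preceding the statement, so the whole argument collapses to extracting a single term from the maximum defining the norm $n(q)$.

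First I would translate the normalization $q(z) = z + \dots + q_n z^n$ into a statement about Taylor coefficients: it forces $\hat{q}(0) = 0$ and, since $q'(0) = 1$, it forces $\hat{q}(1) = 1$. Consequently the $k = 1$ term in the definition of the norm is
\[
\frac{|\hat{q}(1)|}{\binom{n}{1}} = \frac{1}{n}.
\]
Because $n(q)$ is the \emph{maximum} of the ratios $|\hat{q}(k)|/\binom{n}{k}$ over $0 \leq k \leq n$, this particular term is a lower bound for the maximum, and hence $n(q) \geq 1/n$.

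Next I would combine this with Corollary \ref{cor_radius_shell} using only the monotonicity of concentric disks under inclusion: from $1/n \leq n(q)$ we get $\Delta_{1/n} \subseteq \Delta_{n(q)}$, while Corollary \ref{cor_radius_shell} supplies $\Delta_{n(q)} \subseteq q(\Delta)$. Chaining the two inclusions yields $\Delta_{1/n} \subseteq q(\Delta)$, which is the assertion.

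There is essentially no analytic obstacle here, since all the real work was already carried out in Lemma \ref{lemma_fc} and Corollary \ref{cor_radius_shell}; the remaining points deserve only a moment's bookkeeping. One should make sure the degree-$n$ convention is applied consistently, so that $\binom{n}{1} = n$ even when the genuine degree drops because $q_n = 0$ (which the $n$-inversion formalism of the opening section explicitly permits). One should also note that the constant $1/n$ cannot be improved: the normalized polynomial $q(z) = \frac{1}{n}\left(1 - (1-z)^n\right)$ satisfies $q(0)=0$ and $q'(0)=1$, and since $1 - z$ omits the value $0$ on $\Delta$ the value $q(1) = 1/n$ is a limit point but not a member of $q(\Delta)$, placing $1/n$ exactly on the boundary of the range.
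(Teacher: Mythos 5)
Your proof is correct and follows exactly the paper's route: the paper deduces the corollary from Corollary \ref{cor_radius_shell} via the same one-line observation that $\hat{q}(1)=1$ forces $n(q) \geq |\hat{q}(1)|/\binom{n}{1} = 1/n$. Your added sharpness example $q(z)=\frac{1}{n}(1-(1-z)^n)$ is a correct bonus the paper only records later (in rescaled form) for Corollary \ref{cor_dub_koebe}.
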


If we apply Corollary \ref{cor_radius_shell_U} to the polynomial
 $$
 \tilde{q}(z)=\frac{1}{R}q(Rz)
 $$
 we can get a slight general version of this assertion which gives the answer to the question posed in \cite{dub} and \cite{dub2}. 

\begin{corollary}
\label{cor_dub_koebe} For every polynomial $q(z)=z + \dots q_n z^n$ the range $q(\Delta_R)$ contains the disk of radius $\frac{R}{n}$ centered at the origin :
\begin{equation}
\label{dub1}
\Delta_{\frac{R}{n}} \subseteq q(\Delta_R)
\end{equation}
and
\begin{equation}
\label{dub2}
\bar{\Delta}_{\frac{R}{n}} \subseteq q(\bar{\Delta}_R)
\end{equation}
\end{corollary}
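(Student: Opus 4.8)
The plan is to reduce the open inclusion (\ref{dub1}) to Corollary \ref{cor_radius_shell_U} via the rescaling indicated just before the statement, and then to promote it to the closed inclusion (\ref{dub2}) by a short compactness argument. First I would set $\tilde{q}(z)=\frac{1}{R}q(Rz)$ and note that, writing $q(z)=z+q_2z^2+\dots+q_nz^n$, one has
$$
\tilde{q}(z)=z+q_2R z^2+\dots+q_nR^{\,n-1}z^n,
$$
so $\tilde{q}$ is again a polynomial of the normalized form $z+\dots$ with $\tilde{q}(0)=0$ and $\tilde{q}'(0)=1$. Corollary \ref{cor_radius_shell_U} therefore applies to $\tilde{q}$ and gives $\Delta_{1/n}\subseteq\tilde{q}(\Delta)$.

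Next I would undo the scaling. Substituting $w=Rz$ yields the set identity $\tilde{q}(\Delta)=\frac{1}{R}q(\Delta_R)$, since as $z$ runs over $\Delta$ the point $w=Rz$ runs over $\Delta_R$. Combining this with the inclusion of the previous step gives $\Delta_{1/n}\subseteq\frac{1}{R}q(\Delta_R)$, and multiplying by $R$ (which carries $\Delta_{1/n}$ onto $\Delta_{R/n}$) produces exactly (\ref{dub1}), namely $\Delta_{R/n}\subseteq q(\Delta_R)$.

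Finally, for (\ref{dub2}) I would upgrade this open inclusion to its closure. The open disk $\Delta_{R/n}$ is already contained in $q(\Delta_R)\subseteq q(\bar{\Delta}_R)$, so only the boundary circle $\lvert w\rvert=R/n$ needs attention. Given such a $w$, I would pick $w_k=(1-\tfrac{1}{k})w\in\Delta_{R/n}$; by (\ref{dub1}) there is $z_k\in\Delta_R$ with $q(z_k)=w_k$. Since $\bar{\Delta}_R$ is compact, some subsequence $z_{k_j}$ converges to a point $z_*\in\bar{\Delta}_R$, and continuity of $q$ gives $q(z_*)=\lim_j w_{k_j}=w$, so $w\in q(\bar{\Delta}_R)$, which establishes $\bar{\Delta}_{R/n}\subseteq q(\bar{\Delta}_R)$. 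This last paragraph is the only step that is not pure algebra: it is where compactness of the closed disk and continuity of $q$ are needed, and I expect it to be the main (though modest) obstacle, since the rescaling argument by itself delivers only open disks and says nothing about the bounding circle.
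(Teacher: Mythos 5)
Your proposal is correct and follows exactly the route the paper takes: apply Corollary \ref{cor_radius_shell_U} to the rescaled polynomial $\tilde{q}(z)=\frac{1}{R}q(Rz)$ to get (\ref{dub1}), then pass to (\ref{dub2}) by a compactness argument. The paper leaves both steps as one-line remarks, and your write-up simply supplies the details (the set identity $\tilde{q}(\Delta)=\frac{1}{R}q(\Delta_R)$ and the subsequence argument for boundary points) that it omits.
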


 The second statement (\ref{dub2}) 
 follows from (\ref{dub1}) by the compactness arguments.
 Example 
 $$
 q(z) =\frac{R}{n}((1+\frac{z}{R})^n-1)
 $$ 
 shows that the size of the circle is the best possible.
 
Corollary 3  implies  the sharp constant in Theorem 1 from \cite{dub} .
  
 \begin{theorem} For every polynomial $p(z)=p_0 + p_1z + \dots + p_n z^n$ and any points $z_1$ and $z_2$ there exist point $\zeta$, such that $p(\zeta)=p(z_2)$ and 
\begin{equation}
\label{dub3}
\vert p(z_1) - p(z_2)  \vert \geq \frac{1}{n}\vert p'(z_1) \vert \vert z_1 -\zeta \vert
\end{equation}
\end{theorem}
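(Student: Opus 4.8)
The plan is to reduce the statement to the closed form of Corollary \ref{cor_dub_koebe} by recentering and normalizing $p$ at the point $z_1$. First I would dispose of the degenerate case $p'(z_1)=0$: then the right-hand side of (\ref{dub3}) vanishes, and the choice $\zeta=z_2$ trivially satisfies $p(\zeta)=p(z_2)$ together with the (trivial) inequality. So from now on assume $p'(z_1)\neq 0$.

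Next I would introduce the normalized polynomial
\begin{equation*}
q(w)=\frac{p(z_1+w)-p(z_1)}{p'(z_1)},
\end{equation*}
which is a polynomial of degree at most $n$ satisfying $q(0)=0$ and $q'(0)=1$, i.e.\ of the form $q(w)=w+\dots+q_n w^n$ required by Corollary \ref{cor_dub_koebe}. The value of $q$ that corresponds to reaching the level $p(z_2)$ is
\begin{equation*}
W=\frac{p(z_2)-p(z_1)}{p'(z_1)},
\end{equation*}
in the sense that $q(w)=W$ is equivalent to $p(z_1+w)=p(z_2)$. Observe the two identities $|p(z_1)-p(z_2)|=|p'(z_1)|\,|W|$ and $|z_1-\zeta|=|w|$ for $\zeta=z_1+w$; substituting them shows that the target inequality (\ref{dub3}) is equivalent to the purely geometric statement $|w|\le n|W|$.

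The heart of the argument is then a single application of the closed version (\ref{dub2}) of Corollary \ref{cor_dub_koebe} with radius $R=n\,|W|$. Since $R/n=|W|$, that inclusion reads $\bar{\Delta}_{|W|}\subseteq q(\bar{\Delta}_{n|W|})$, and because $W\in\bar{\Delta}_{|W|}$ we obtain a point $w$ with $|w|\le n|W|$ and $q(w)=W$. Setting $\zeta=z_1+w$ gives $p(\zeta)=p(z_2)$ and $|z_1-\zeta|=|w|\le n|W|=n\,|p(z_2)-p(z_1)|/|p'(z_1)|$, which is precisely (\ref{dub3}).

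There is no serious analytic obstacle once Corollary \ref{cor_dub_koebe} is in hand; the only genuinely important points are choosing the normalization so that the Koebe-type radius $1/n$ appears with the correct scaling, and insisting on the \emph{closed}-disk inclusion (\ref{dub2}) rather than the open one (\ref{dub1}), since we must actually \emph{attain} the value $W$ at a specific $\zeta$ rather than merely approximate it. The sharpness of the constant $1/n$ is then inherited directly from the extremal example exhibited after Corollary \ref{cor_dub_koebe}.
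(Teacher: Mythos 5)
Your proposal is correct and follows essentially the paper's own route: the paper likewise normalizes $p$ at $z_1$ (via $q(z)=\frac{1}{p'(z_1)}(p(z_1)-p(z_1-z))$, which differs from your $q$ only by signs), sets $R=n\vert W\vert$, and applies the closed inclusion (\ref{dub2}) of Corollary \ref{cor_dub_koebe} to produce the preimage point. The only cosmetic differences are that you treat the degenerate case $p'(z_1)=0$ explicitly (which the paper leaves implicit), while the paper explicitly notes the trivial case $w=0$ (your $W=0$), which you leave implicit.
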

\begin{proof}
Let
$$
q(z)=\frac{1}{p'(z_1)}(p(z_1) -p(z_1-z))
$$
Thus $q(0)=0$ and $q'(0)=1$. For any point $z$ there is a point $\eta$, such that $q(z)=q(\eta)$ and 
\begin{equation}
\label{dub3.1}
\vert q(z) \vert \geq \frac{1}{n}\vert \eta \vert 
\end{equation}
To prove this,  denote $w=q(z)$ and  define $R=n\vert w \vert$  for $w \neq 0$ ( if $w=0$ there is nothing to prove). We have $w \in \bar{\Delta}_{\frac{R}{n}}$ and Corollary \ref{cor_dub_koebe} implies that there is $\eta  \in  \bar{\Delta}_R$, such that $w=q(z)=q(\eta)$.

Now, for the given $z_2$ define $z=z_1-z_2$, apply (\ref{dub3.1}) and choose $\zeta = z_1-\eta$.

\end{proof}


\begin{thebibliography}{XXXX}

\bibitem[DH]{dm1}  D. V. Dmitrishin,  A. D. Hamitova, \textit{Methods of Harmonic Analysis in Control of Nonlinear Discrete Systems}, Compt. Rend. Math., 351(2013), 357-370..
\bibitem[D1]{dub} V. N. Dubinin, \textit{On the finite-increment theorem for complex polynomials}, Math. Notes, 88:5 (2010), 647-654. 
\bibitem[D2]{dub2} V. N. Dubinin, \textit{Methods of geometric function theory in classical and modern problems for polynomials}, Russian Mathematical Surveys, 67(4) (2012), 599 - 684. 







\end{thebibliography}
\end{document}